\newtheorem{thm}{Theorem}
\newtheorem{cor}[thm]{Corollary}
\newtheorem{prop}[thm]{Proposition}
\theoremstyle{definition}
\newtheorem{ex}[thm]{Example}
\newtheorem{rem}[thm]{Remark}
\newtheorem{lem-app}{Lemma}[section]
\newtheorem{thm-app}[lem-app]{Theorem}
\begin{document}

%%%%%%%%%%%%%%%%%%%%%%%%%%%%%%%%%%%%%%%%%%%%

\title[Hypergeometric RRU]{Central Limit Theorems for a\\ Hypergeometric
  Randomly Reinforced Urn} 

\author{Irene Crimaldi} 

\address{Irene
  Crimaldi, IMT Institute for Advanced Studies, Piazza San Ponziano 6,
  55100 Lucca, Italy} 

\email{irene.crimaldi@imtlucca.it}

\keywords{Central Limit Theorem; P\'olya urn; Randomly Reinforced Urn;
  Stable Convergence}

\subjclass[2010]{60F05; 60G57; 60B10; 60G42}

\date{\today}

\begin{abstract} 
We consider a variant of the randomly reinforced urn where more balls
can be simultaneously drawn out and balls of different colors can be
simultaneously added. More precisely, at each time-step, the
conditional di\-stri\-bu\-tion of the number of extracted balls of a
certain color given the past is assumed to be hypergeometric. We prove
some central limit theorems in the sense of stable convergence and of
almost sure conditional convergence, which are stronger than
convergence in distribution. The proven results provide asymptotic
confidence intervals for the limit proportion, whose distribution is
generally unknown. Moreover, we also consider the case of more urns
subjected to some random common factors.
\end{abstract}

\maketitle

\section{Introduction}
\label{intro}

Urn models, also known as preferential attachment models, are
stochastic processes in which, along the time-steps, different
individuals or objects or categories (represented by different colors)
receive some quantity, called ``weight'' (represented by the number of
balls), in such a way that the higher the total weight they already
have until a certain time, the greater the probability of receiving an
additional weight at the next time (i.e. a ``self-reinforcing''
property).  The preferential attachment is a key feature governing the
dynamics of many biological, economic and social systems. Therefore,
urn models are a very popular topic because of their hints for
theoretical research and their applications in various fields:
clinical trials (e.g. \cite{BHR, CL, DFL, HR, MF, z3}), economics and
finance (e.g. \cite{beggs, ER, HP}), information science
(e.g. \cite{mah-2008, MH}), network theory (e.g. \cite{BCM, CCCP,
  colle}) and so on.

\indent The first example of urn scheme is the standard
Eggenberger-P\'olya urn \cite{P23, P31}: an urn contains $a$ red and
$b$ black balls and, at each discrete time, a ball is drawn out from
the urn and then it is put again inside the urn together with an
additional constant number $k>0$ of other balls of the same color. Let
$Z_n$ be the proportion of red balls at time $n$, namely, the
conditional probability of drawing a red ball at time $n+1$, given the
outcomes of the previous extractions. A well known result (see, for
instance, \cite{mah-2008}) states that $(Z_n)$ is a bounded martingale
and $Z_n$ converges almost surely to a random variable $Z$ with Beta
distribution with parameters $a/k$ and $b/k$.

\indent Subsequently, urn models have been widely studied by many
researchers and there is a rather extensive literature on them
(e.g. \cite{AMS, BH, BCPR-urne1, BCPR-urne-dom, Bose, CPS, Chen, Das,
  J05, LP, MPS, z1}): a large number of new ``replacement policies''
(for instance, balanced rules, tenable mechanisms and random
reinforcements) and various related models (for instance, the
Poisson-Dirichlet model \cite{BCL} and the very recent Indian buffet
model \cite{BCPR-indian}) have been introduced and analyzed from
different points of view and by means of different techniques
(combinatorial methods, martingales, branching processes, stochastic
approximations, etc.).  We refer to \cite{P}, and the references
therein, for a general survey on random processes with reinforcement.

\indent In particular, as an extension of the P\'olya urn, the
Randomly Reinforced Urn (RRU) was recently proposed and analyzed
\cite{AMS, BCPR-urne1, BCPR-urne-dom, BCPR, BPR, C, CL, MF, MPS, z2,
  z1}. It consists in a multicolor urn which is reinforced at each
time with a random number of additional balls according to the color
of the extracted ball. The distribution of the reinforcements may
depend on time and be different for the different colors. These models
are suitable in order to describe the evolution of some system, such
as a population, and also to perform an adaptive design, i.e. an
experimental design that uses accumulated data to decide on how to
carry on the study, without undermining the validity and the integrity
of the experiment. Indeed, the RRU model provides randomized treatment
allocation schemes (clinical trials) where patients are assigned to
the best treatment with probability converging to one
\cite{BCPR-urne-dom, MF}.

\indent In \cite{perron} a new version of the RRU model is
formulated. This model consists of an urn which contains balls of two
different colors, say $a\in{\mathbb N}\setminus\{0\}$ balls of color A
and $b\in{\mathbb N}\setminus\{0\}$ balls of color B. At each time
$n\geq 1$, we simultaneously (i.e. without replacement) draw a random
number $N_n$ of balls. Let $X_n$ be the number of extracted balls of
color A.  Then we return the extracted balls in the urn together with
other $R_nX_n$ balls of color A and $R_n(N_n-X_n)$ balls of color
B. The size $R_n$ of the reinforcement is assumed independent of
$[N_1,X_1,R_1,\ldots,N_{n-1}, X_{n-1},R_{n-1}, N_n, X_n]$. We will
call this model ``Hypergeometric Randomly Reinforced Urn'' (HRRU).

\indent With respect to the RRU model, the main novelties of this
model are that, at each time, more balls can be simultaneously drawn
out (and returned in the urn) and balls of different colors can be
simultaneously added. The number of extracted balls of a certain color
depends on the composition of the urn at the moment of the extraction,
akin a preferential attachment rule. When $N_n=1$ for each $n$, the
HRRU reduces to the RRU model with equal reinforcements for the two
colors. In particular, the case $N_n=1$ and $R_n=k$ (where $k$ is a
constant) for each $n$ corresponds to the standard Eggenberger-P\'olya
urn; while the case $N_n=h$ and $R_n=k$ (where $h$ and $k$ are two
constants) for each $n$ coincides with the model in \cite{Chen,
  Chenb}. Also the model introduced and studied in \cite{CL} can be
seen as a RRU model where balls of different colors can be
simultaneously added, but there the ``multi-updating'' is due to a
delay in the updating. Indeed, at each time $n$ a single ball is drawn
out (and returned in the urn) but the updating is performed at certain
time-steps $(u_i)_{i\geq 1}$ as follows: at time $u_i$, we add a
random number $R_n$ of balls of the same color of the ball extracted
at time $n$, for each $n=u_{i-1}+1,\dots, r_i$, with $r_i\leq u_i$.

\indent As explained in \cite{perron}, a possible interpretation of
the HRRU model is the following. At each time $n\geq 1$, a new firm
appears on the market and it has to choose the operative system for
its computers among two different types, say operative system A (to
which we associate color A) and operative system B (to which we
associate color B). The total number of its computers is $R_nN_n$
(more precisely, $N_n$ blocks of $R_n$ computers each). The firm
decides to adopt $X_n$ blocks (of size $R_n$ each) with operative
system A and $(N_n-X_n)$ blocks (of size $R_n$ each) with operative
systems B, according to the number of computers with operative systems
$A$ already present in the market. Another possible interpretation
follows.  At each time $n\geq 1$, a pharmaceutical firm has to select
the size of its production for two different kinds of products, say
product A and product B. For instance, A and B can be two medicines
for the same disease but with different costs. The total of its
production is $R_nN_n$ (more precisely, the firm produces $N_n$
blocks, each of size $R_n$). The firm decides to produce $X_n$ blocks
of type A-products and $(N_n-X_n)$ blocks of type B-products according
to the number of type A-products already on the market. Finally,
setting $R_n=1$ for each $n$, the HRRU model can be employed to
describe the growth of a population in which we can distinguish two
types of individuals, say A and B. At each time $n$, the random
numbers $N_n$ and $X_n$ represent the new offsprings and the new
offsprings of type A, respectively. The number of the new type
A-individuals depends on the composition of the population at the
preceeding time-step.

\indent It is shown in \cite{perron}, that $Z_n$ converges almost
surely to a random variable $Z$, whose distribution is generally
unknown. Authors also provide some results concerning the distribution
of the limit random variable $Z$ in some particular cases.  In the
present paper we continue the study of the model proving some central
limit theorems and making another step toward the description of the
distribution of $Z$. Further, the proven central limit theorems can be
used in order to obtain asymptotic confidence intervals for the limit
proportion $Z$. Moreover, we can also consider the case of more urns
(for instance, according to the previous interpretations, the
different urns can re\-pre\-sent different markets or different
populations), each of them following a HRRU dynamics, and perform some
test for comparing them or get asymptotic confidence intervals for any
linear combination of the limit proportions.

\indent The paper is organized as follows. In Section \ref{model} we
formally introduce the model. In Section \ref{stable} we recall the
needed facts concerning stable convergence and almost sure conditional
convergence. In Section \ref{results} we give and discuss the main
results, whose proofs are postponed to Section \ref{proofs}. Finally,
in Section \ref{stat} we provide some statistical tools based on the
proven results. The paper is enriched with an appendix which contains
some useful auxiliary results.

\section{The HRRU model}
\label{model}

An urn contains $a\in{\mathbb N}\setminus\{0\}$ balls of
color A and $b\in{\mathbb N}\setminus\{0\}$ balls of color B. At each
time $n\geq 1$, we simultaneously (i.e. without replacement) draw a
random number $N_n$ of balls. Let $X_n$ be the number of extracted
balls of color A.  Then we return the extracted balls in the urn
together with other $R_nX_n$ balls of color A and $R_n(N_n-X_n)$ balls
of color B. More precisely, we take a 
probability space 
$(\Omega,\mathcal{A},P)$
and, on it, some random variables  
$N_n,\,X_n,\,R_n$ 
 such that, for each $n\geq 1$, we have:

\begin{itemize}
\item[i)] The conditional distribution of the random variable $N_n$ given 
$$[N_1,X_1, R_1,\dots,N_{n-1}, X_{n-1}, R_{n-1}]$$ is concentrated on
  $\{1,\dots,S_{n-1}\}$ where
\begin{equation}
S_{n-1}=a+b+\sum_{j=1}^{n-1} N_j R_j
=\hbox{total number of balls at time } n-1.
\end{equation}

\item[ii)] The conditional distribution of the random variable $X_n$ given 
$$[N_1,X_1, R_1,\dots,N_{n-1}, X_{n-1}, R_{n-1}, N_n]$$ is
  hypergeometric with parameters $N_n,\, S_{n-1}$ and
  $H_{n-1}$\footnote{ We recall that a random variable $X$ has
    hypergeometric distribution with parameter $N,S, H$ if
    $P\{X=k\}=\frac{\binom{H}{k}\binom{S-H}{N-k}}{\binom{S}{N}}$}
  where
\begin{equation}
H_{n-1}=a+\sum_{j=1}^{n-1} X_j R_j
=\hbox{total number of balls of color A at time } n-1.
\end{equation}

\item[iii)] The random variable $R_{n}$ takes values in ${\mathbb
  N}\setminus\{0\}$ and it is independent of
 $$[N_1,X_1,R_1,\ldots,N_{n-1}, X_{n-1},R_{n-1}, N_n, X_n].$$
\end{itemize}

\indent Note that we do not specify the conditional
distribution of $N_n$ given the past $[N_1,X_1, R_1,\dots,N_{n-1}, X_{n-1},
  R_{n-1}]$ nor the distribution of $R_n$.\\

\indent We will refer to the above urn model as the {\em
  Hypergeometric Randomly Reinforced Urn} (HRRU)\footnote{ It
  coincides with the model introduced in \cite{perron} but here the
  adopted notation is different: $M_n$ in \cite{perron} corresponds to
  our $N_n$ (total number of extracted balls at time $n$), $R_n$ in
  \cite{perron} corresponds to our $X_n$ (number of extracted balls of
  color A at time $n$) and $N_n$ in \cite{perron} corresponds to our
  $R_n$ (number of added balls for each extracted ball at time
  $n$). We decided to adopt a different notation with respect to
  \cite{perron} in order to use a notation more similar to the one
  used in the RRU model literature.}. It is worthwhile to remark that
this model include the classical P\'olya urn (the case with $N_n=1$
and $R_n=k$ for each $n$) and the randomly reinforced urn with the
same reinforcements for both colors (the case with $N_n=1$ for each
$n$ and $R_n$ arbitrarily random).\\

\indent We set $Z_n$ equal to the proportion of balls of color A in the
urn (immediately after the updating of the urn at time $n$ and
immediately before the $(n+1)$-th extraction), that is $Z_0=a/(a+b)$
and
\begin{gather*}
  Z_n=\frac{H_n}{S_n}\quad\hbox{for } n\geq 1.
\end{gather*}
Moreover we set 
\begin{equation*}
\mathcal{F}_0=\{\emptyset, \Omega \},\quad\mathcal{F}_n=
\sigma\bigl(N_1,X_1,R_1,\ldots,N_n, X_n,R_n\bigr)\quad\hbox{for } n\geq 1\,,
\end{equation*}
and 
\begin{equation*}
\mathcal{G}_n=\mathcal{F}_n\vee\sigma(N_{n+1}),\quad
\mathcal{H}_n=\mathcal{G}_n\vee\sigma(R_{n+1})\quad\hbox{for } n\geq 0.
\end{equation*}

\section{Stable convergence and almost sure conditional convergence}
\label{stable}

Stable convergence has been introduced by R\'enyi in \cite{renyi-1963}
and subsequently in\-ve\-sti\-ga\-ted by various authors, e.g.
\cite{AE, cri-let-pra-2007, feigin-1985, jacod-1981, peccati-2008}. It
is a strong form of convergence in distribution, in the sense that it
is intermediate between the simple convergence in distribution and the
convergence in probability. In this section we recall some basic
definitions and properties. For more details, we refer the reader to
\cite{cri-let-pra-2007, hall-1980} and the references therein.\\

\indent Let $(\Omega, {\mathcal A}, P)$ be a probability space and let
$S$ be a Polish space (i.e. a completely metrizable separable
topological space), endowed with its Borel $\sigma$-field. A {\em
  kernel} on $S$, or a random probability measure on $S$, is a
collection $K=\{K(\omega,\cdot):\, \omega\in\Omega\}$ of probability
measures on the Borel $\sigma$-field of $S$ such that, for each
bounded Borel real function $f$ on $S$, the map
$$
\omega\mapsto 
K\!f(\omega)=\int f(x)\, K(\omega, dx) 
$$ is $\mathcal A$-measurable. Given a kernel on $S$ and an event $H$
in $\mathcal A$ with $P(H)>0$, we can define a probability measure on
$S$, denoted by $P_HK$, as follows:
$$
P_HK(B)=E[K(\cdot,B)|H]=P(H)^{-1}\int_H K(\omega, B)\, P(d\omega),
$$
for each Borel set $B$ of $S$. We simply write $PK$ when $H=\Omega$. 
It is easy to verify the relation
$$
\int f(x)\,P_HK(dx)=P(H)^{-1}\int_H K\!f(\omega)\,P(d\omega).
$$

\indent On $(\Omega, {\mathcal A},P)$ let $(Y_n)$ be a sequence of
$S$-valued random variables and let $K$ be a kernel on $S$. Then we
say that $Y_n$ converges {\em stably} to $K$, and we write
$Y_n\stackrel{stably}\longrightarrow K$, if
$$
P(Y_n \in \cdot \,|\, H)\stackrel{weakly}\longrightarrow 
P_HK
\qquad\hbox{for all } H\in{\mathcal A}\; \hbox{with } P(H) > 0.
$$

\indent Clearly, if $Y_n\stackrel{stably}\longrightarrow K$, then
$Y_n$ converges in distribution to the probability measure $PK$.
Moreover, we recall that the convergence in probability of $Y_n$ to a
random variable $Y$ is equivalent to the stable convergence of $Y_n$
to a special kernel, which is the Dirac kernel $K=\delta_Y$.\\

\indent We next mention a form of convergence, called almost sure
conditional convergence, introduced and studied in \cite{C}, and
afterwards employed by other researchers (see, for example, \cite{AMS,
  z2}).\\

\indent For each $n$, let ${\mathcal F}_n$ be a sub-$\sigma$-field of
$\mathcal A$ and set ${\mathcal F}=({\mathcal F}_n)$ (called
conditioning system). If $K_n$ denotes a version of the conditional
distribution of $Y_n$ given ${\mathcal F}_n$, we say that $Y_n$
converges to $K$ in the sense of the {\em almost sure conditional
  convergence} with respect to ${\mathcal F}$, if, for almost every
$\omega$ in $\Omega$, the probability measure $K_n(\omega, \cdot)$
converges weakly to $K(\omega,\cdot)$. Evidently, if
$Y_n$ converges to $K$ in the sense of the almost sure conditional
convergence with respect to ${\mathcal F}$, we have that 
$$
E\left[f(Y_n)\,|\,{\mathcal F}_n\right]\stackrel{a.s.}\longrightarrow K\!f
$$ for each bounded continuous real function $f$ on $S$ and $Y_n$
converges in distribution to the probability measure $PK$.\\

\indent In the sequel we will adopt the notation ${\mathcal N}(0,V)$
in order to indicate the {\em Gaussian kernel} with zero mean and
random variance $V$, that is the collection $\{ {\mathcal N}(0,
V(\omega) ):\, \omega\in \Omega\}$ of centered Gaussian distributions,
where $V$ is a positive random variable (${\mathcal N}(0,0)$ is meant
as the Dirac probability measure concentrated in zero). Further, given
two kernels $K_1$ and $K_2$, we will denote by $K_1\otimes K_2$ the
kernel given by the product measures $K_1(\omega,\cdot)\otimes
K_2(\omega, \cdot)$.

\section{Convergence results for the HRRU model}
\label{results}

The sequence $(Z_n)$ is a bounded $\mathcal H$-martingale. Indeed, we have 
\begin{equation}\label{eq-base} 
Z_{n}-Z_{n-1}=\frac{R_{n} (X_{n}-N_{n}Z_{n-1})}{S_{n}}
\end{equation} 
and so
\begin{equation*}
\begin{split}
E[Z_{n}-Z_{n-1} |{\mathcal H}_{n-1}]
&=
\frac{R_{n}}{S_{n}}\left(E[X_{n} |{\mathcal H}_{n-1}]-N_{n}Z_{n-1}\right)
=
\frac{R_{n}}{S_{n}}\left(E[X_{n} |{\mathcal G}_{n-1}]-N_{n}Z_{n-1}\right)\\
&=0
\end{split}
\end{equation*}
(where the second equality holds true because of condition iii) and
the last one is implied by condition ii)).  Hence, the sequence
$(Z_n)$ converges almost surely (and in $L^1$) to a random variable
$Z$.  Lemma \ref{lemma-app} (with $Y_n=X_n/N_n$) immediately implies
that the sequence
\begin{equation}
M_n=\frac{1}{n}\sum_{j=1}^{n}\frac{X_j}{N_j}
\end{equation}
also converges almost surely (and in $L^1$) to $Z$ (cfr. Th.~3.1,
Th.~3.5 in \cite{perron}). \\

\indent The distribution of $Z$ is unknown except in a few particular
cases (see \cite{perron}). We are going to prove the following central
limit theorems, useful in order to get some information on $Z$.

\begin{thm}\label{main-1}
Assume there exists a constant $k\in{\mathbb N} \setminus\{0\}$ such
that $N_n\vee R_n\leq k$ for each $n$ and
\begin{equation}\label{ass-base}
E[N_n|{\mathcal F}_{n-1}]\stackrel{a.s.}\longrightarrow N,\quad 
E[R_n]\longrightarrow m,\quad  E[R_n^2]\longrightarrow q\,,
\end{equation}
where $N$ is a strictly positive bounded random variable and $m$ and
$q$ are finite and strictly positive numbers.\\ \indent Then
$\sqrt{n}(Z_n-Z)$ converges in the sense of the almost sure
conditional convergence with respect to ${\mathcal F}=({\mathcal F}_n)$
to the Gaussian kernel ${\mathcal N}(0, V)$, where
$$V=q m^{-2} N^{-1} Z(1-Z).$$
\end{thm}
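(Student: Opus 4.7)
The plan is to apply a central limit theorem for the tail of a bounded martingale, in the a.s.\ conditional form developed in \cite{C}, to the $\mathcal{H}$-martingale $(Z_n)$. Writing
\begin{equation*}
\sqrt{n}\,(Z - Z_n) = \sum_{k > n}\sqrt{n}\,(Z_k - Z_{k-1}),
\end{equation*}
it suffices to verify: (a) that the conditional quadratic variation $U_n := n\sum_{k > n} E[(Z_k - Z_{k-1})^2\mid\mathcal{F}_{k-1}]$ converges a.s.\ to $V$, (b) a Lindeberg-type negligibility, and (c) that $V$ is $\mathcal{F}_\infty$-measurable, so that the resulting stable convergence upgrades to a.s.\ conditional convergence w.r.t.\ $\mathcal{F}$.

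The first ingredient is $S_n/n \to mN$ a.s. By condition iii), $E[N_j R_j\mid\mathcal{F}_{j-1}] = E[R_j]\,E[N_j\mid\mathcal{F}_{j-1}]$, which is uniformly bounded and, by \eqref{ass-base}, converges a.s.\ to $mN$; Cesàro then gives $n^{-1}\sum_{j \leq n}E[N_j R_j\mid\mathcal{F}_{j-1}]\to mN$ a.s. The centered sequence $N_j R_j - E[N_j R_j\mid\mathcal{F}_{j-1}]$ is a bounded $\mathcal{F}$-martingale difference, so its Cesàro mean vanishes a.s.\ by the martingale strong law, yielding $S_n/n \to mN$ a.s.

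The central step is the variance computation. Using the hypergeometric variance formula and the tower property,
\begin{equation*}
E[(Z_k - Z_{k-1})^2\mid\mathcal{F}_{k-1}]
 = Z_{k-1}(1 - Z_{k-1})\,E\!\left[\frac{R_k^2 N_k}{S_k^2}\cdot\frac{S_{k-1} - N_k}{S_{k-1} - 1}\,\Big|\,\mathcal{F}_{k-1}\right].
\end{equation*}
Since $N_k, R_k \leq k$ and $S_k\to\infty$, the last factor tends to $1$ a.s.\ and $S_k/S_{k-1}\to 1$ a.s., so to leading order this reduces to $Z_{k-1}(1 - Z_{k-1})\,S_{k-1}^{-2}\,E[R_k^2 N_k\mid\mathcal{F}_{k-1}]$. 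Condition iii) gives $E[R_k^2 N_k\mid\mathcal{F}_{k-1}] = E[R_k^2]\,E[N_k\mid\mathcal{F}_{k-1}] \to qN$ a.s., and combined with $S_{k-1}\sim mNk$ and $Z_{k-1}\to Z$ a.s., one obtains $k^2\,E[(Z_k - Z_{k-1})^2\mid\mathcal{F}_{k-1}] \to V$ a.s. A Toeplitz argument based on $n\sum_{k > n}1/k^2 \to 1$ then yields $U_n \to V$ a.s.

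The Lindeberg condition is immediate from $|Z_k - Z_{k-1}| \leq R_k N_k/S_k = O(1/k)$ a.s., so $\sqrt{n}\sup_{k>n}|Z_k - Z_{k-1}|\to 0$. The $\mathcal{F}_\infty$-measurability of $V = qm^{-2}N^{-1}Z(1-Z)$ follows since both $Z$ and $N$ are a.s.\ limits of $\mathcal{F}_n$-measurable quantities. The tail-martingale CLT of \cite{C} then delivers a.s.\ conditional convergence of $\sqrt{n}(Z_n - Z)$ to $\mathcal{N}(0, V)$ w.r.t.\ $\mathcal{F}$. The principal technical difficulty will be making the Toeplitz passage rigorous: the errors from replacing $S_k, S_{k-1}$ by $mNk$ and from exchanging $\mathcal{H}$- and $\mathcal{F}$-conditioning must be summed against $O(1/k^2)$ weights and amplified by the factor $n$, which requires controlling the a.s.\ rates via a summation-by-parts bound on the bounded martingale differences involved.
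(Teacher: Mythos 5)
Your proposal follows essentially the same route as the paper: the same a.s.\ conditional CLT for the tail of the bounded martingale $(Z_n)$, the same hypergeometric variance computation combined with the independence of $R_k$, the same preliminary fact $S_n/n\to mN$, and the same $O(1/k)$ bound on the increments. The only discrepancy is that the CLT actually invoked (Theorem~\ref{th-main-app}, condition c2)) requires $n\sum_{j\ge n}(Z_{j-1}-Z_j)^2\to V$ a.s.\ for the \emph{actual} squared increments rather than the conditional quadratic variation you verify in step (a); the paper closes this routine gap--and simultaneously handles the Toeplitz/error-control issue you flag at the end--by a single application of Lemma~\ref{lemma-app} to $Y_j=j^2(Z_{j-1}-Z_j)^2$, sandwiching $S_j^{-2}$ between $S_{j-1}^{-2}$ and $(S_{j-1}+k^2)^{-2}$ to justify the ``to leading order'' replacement.
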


\begin{thm}\label{main-2}
Under the assumptions of Theorem \ref{main-1}, suppose also that  
\begin{equation}\label{ass-base-bis}
E[N_n^{-1}|{\mathcal F}_{n-1}]\stackrel{a.s.}\longrightarrow \eta,
\end{equation}
where $\eta$ is a strictly positive bounded random variable.\\ 
\indent Then 
$$
[ \sqrt{n}(M_n-Z_n), \sqrt{n}(Z_n-Z) ] 
\stackrel{stably}\longrightarrow 
{\mathcal N}(0, U)\otimes {\mathcal N}(0, V),
$$ 
where
$$
 U=V+Z(1-Z)\left(\eta-2N^{-1}\right)=
\left(q m^{-2} N^{-1}+\eta-2N^{-1}\right)Z(1-Z).
$$ 
\end{thm}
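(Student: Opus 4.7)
The plan is to cast $\sqrt{n}(M_n - Z_n)$ as a normalized sum of $\mathcal{H}$-martingale differences, apply a martingale CLT for stable convergence to it, and then splice the result with Theorem \ref{main-1} via a conditional Fourier argument. Put $Y_j := X_j/N_j - Z_{j-1}$. By \eqref{eq-base}, $Z_j - Z_{j-1} = R_j N_j Y_j / S_j$, so summation by parts gives
$$
M_n - Z_n = \frac{1}{n}\sum_{j=1}^{n} Y_j + \frac{1}{n}\sum_{j=1}^{n}(Z_{j-1} - Z_n) = \frac{1}{n}\sum_{j=1}^{n} \xi_j,\qquad \xi_j := \Bigl(1 - \tfrac{jR_j N_j}{S_j}\Bigr) Y_j.
$$
Conditions (ii) and (iii) give $E[Y_j | \mathcal{H}_{j-1}] = 0$, the prefactor is $\mathcal{H}_{j-1}$-measurable, and $|\xi_j| \le 1 + k^2$ since $N_j, R_j \le k$ and $S_j \ge j$.

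Next, I plan to identify the asymptotic conditional variance. The hypergeometric formula yields $E[Y_j^2 | \mathcal{H}_{j-1}] = \tfrac{Z_{j-1}(1-Z_{j-1})}{N_j}\cdot\tfrac{S_{j-1}-N_j}{S_{j-1}-1}$, so after expanding $(1 - jR_jN_j/S_j)^2$ I am reduced to establishing the three a.s.\ limits
$$
\frac{1}{n}\sum_{j=1}^{n} \frac{1}{N_j} \to \eta,\quad \frac{1}{n}\sum_{j=1}^{n} \frac{jR_j}{S_j} \to \frac{1}{N},\quad \frac{1}{n}\sum_{j=1}^{n} \frac{j^2 R_j^2 N_j}{S_j^2} \to \frac{q}{Nm^2},
$$
each of which I will obtain by combining Ces\`aro's lemma applied to the conditional expectations (using \eqref{ass-base} and \eqref{ass-base-bis}) with the strong law for bounded martingale differences; the latter two limits also require the auxiliary fact $S_n/n \to Nm$, proved by the same scheme applied to $\sum_j N_j R_j$ via the independence of $R_j$ from $(\mathcal{F}_{j-1}, N_j)$. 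Using $Z_{j-1}\to Z$ and $(S_{j-1}-N_j)/(S_{j-1}-1)\to 1$, everything combines to give $\tfrac{1}{n}\sum E[\xi_j^2 | \mathcal{H}_{j-1}] \to Z(1-Z)(\eta - 2/N + q/(Nm^2)) = U$ almost surely. Uniform boundedness of $\xi_j$ trivialises the Lindeberg condition, and the martingale stable CLT in the form of \cite{cri-let-pra-2007} then yields $\sqrt{n}(M_n - Z_n) \stackrel{stably}\longrightarrow \mathcal{N}(0, U)$ with respect to $\mathcal{F}_\infty = \mathcal{H}_\infty$.

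For the joint limit, take bounded $\mathcal{F}_\infty$-measurable $Y$ and $\lambda,\mu\in\mathbb{R}$; the $\mathcal{F}_n$-measurability of $\sqrt{n}(M_n - Z_n)$ lets me write
$$
E\!\left[Y e^{i\lambda\sqrt{n}(M_n - Z_n) + i\mu\sqrt{n}(Z_n - Z)}\right] = E\!\left[e^{i\lambda\sqrt{n}(M_n - Z_n)}\,E[Y e^{i\mu\sqrt{n}(Z_n - Z)} \,|\, \mathcal{F}_n]\right].
$$
Splitting $Y = E[Y|\mathcal{F}_n] + (Y - E[Y|\mathcal{F}_n])$, using Theorem \ref{main-1} (a.s.\ conditional convergence gives $E[e^{i\mu\sqrt{n}(Z_n - Z)}|\mathcal{F}_n] \to e^{-\mu^2 V/2}$ a.s.) together with the $L^1$-convergence $E[Y|\mathcal{F}_n] \to Y$, the inner factor may be replaced by $Y e^{-\mu^2 V/2}$ with $o(1)$ error. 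Since $V$ is $\mathcal{F}_\infty$-measurable, $Y e^{-\mu^2 V/2}$ is a legitimate test random variable for the stable convergence of Step 2, and invoking it produces the limit $E[Y e^{-\lambda^2 U/2 - \mu^2 V/2}]$, which is precisely the characteristic function of $\mathcal{N}(0,U)\otimes\mathcal{N}(0,V)$ integrated against $Y$. This delivers the joint stable convergence.

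The hard part is the variance identification, and within it the cross term $\tfrac{1}{n}\sum jR_j/S_j$: because $S_j$ is itself a running sum of dependent terms $N_iR_i$, pinning down the limit $1/N$ forces me first to establish $S_n/n\to Nm$ almost surely and only then to exploit the independence of $R_j$ from the past. Once this anchor is in place, the remaining Ces\`aro/strong-law calculations and the Fourier splicing in Step 3 are routine.
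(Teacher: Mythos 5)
Your proposal is correct, and its computational core coincides with the paper's: your $\xi_j=\bigl(1-jR_jN_j/S_j\bigr)\bigl(X'_j-Z_{j-1}\bigr)$ is exactly the summand $X'_j-Z_{j-1}+j(Z_{j-1}-Z_j)$ of condition c3) in Theorem \ref{th-main-app}, and your three Ces\`aro limits ($\eta$, $-2/N$, $q/(Nm^2)$, each times $Z(1-Z)$) are precisely the three conditional-expectation limits the paper computes (the $\eta Z(1-Z)$, cross-term, and $V$ pieces), obtained by the same Lemma~\ref{lemma-app} mechanism — you merely condition on ${\mathcal H}_{j-1}$ and expand the prefactor, while the paper conditions on ${\mathcal F}_{j-1}$ and expands the square of the sum. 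Where you genuinely diverge is in the packaging: the paper stops after verifying c3), importing the joint stable CLT wholesale from Theorem~\ref{th-main-app} (conditions c1)--c2) having been checked in the proof of Theorem~\ref{main-1}), whereas you unpack that black box, proving the marginal stable convergence of $\sqrt{n}(M_n-Z_n)$ by a martingale CLT (boundedness of $\xi_j$ trivializing Lindeberg, nesting automatic for a single filtration) and then splicing in Theorem~\ref{main-1} through the conditional characteristic-function identity — which is essentially how Theorem~\ref{th-main-app} is itself proved in \cite{BCPR-urne1, BCPR-indian}, and is the same Blackwell--Dubins device the paper uses for Corollary~\ref{cor-legge}. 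Your route is longer but self-contained and makes transparent why the joint limit factorizes (the ${\mathcal F}_n$-measurability of $M_n-Z_n$ against the a.s.\ conditional convergence of $\sqrt{n}(Z_n-Z)$); the paper's is shorter at the price of an external reference. The only points worth tightening are cosmetic: note that testing against bounded ${\mathcal F}_\infty$-measurable $Y$ suffices for stable convergence because all the variables and the limit kernel are ${\mathcal F}_\infty$-measurable, and that the replacement of $E[Y\,e^{i\mu\sqrt{n}(Z_n-Z)}\mid{\mathcal F}_n]$ by $Y e^{-\mu^2V/2}$ uses both the a.s.\ bounded convergence of $E[Y\mid{\mathcal F}_n]E[e^{i\mu\sqrt{n}(Z_n-Z)}\mid{\mathcal F}_n]$ and the $L^1$ bound $E\bigl[\,\lvert Y-E[Y\mid{\mathcal F}_n]\rvert\,\bigr]\to 0$ for the remainder.
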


From the above theorems we have that $\sqrt{n}(M_n-Z_n)$ converges
stably to ${\mathcal N}(0, U)$ and $\sqrt{n}(M_n-Z)$ converges stably
to ${\mathcal N}(0, U+V)$.\\

\indent The following corollary enriches Corollary 3.4 in
\cite{perron}.

\begin{cor}\label{cor-legge} 
Assume there exists a constant $k\in{\mathbb N} \setminus\{0\}$ such that
$N_n\vee R_n\leq k$ for each $n$. Then:
\begin{itemize}
\item[a)] $P(Z=0)+P(Z=1)<1$.
\item[b)] If assumptions (\ref{ass-base}) are also satisfied, then
  $P(Z=z)=0$ for all $z\in (0,1)$.
\end{itemize}
\end{cor}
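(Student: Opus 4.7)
The plan is to prove both parts via the martingale structure of $(Z_n)$ and its conditional increment variance, with part (b) additionally invoking the CLT of Theorem~\ref{main-1}.

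For part (a), the strategy is to show $E[Z(1-Z)]>0$. A direct computation using the hypergeometric variance of $X_n$ given $\mathcal{G}_{n-1}$ together with (\ref{eq-base}) gives
\[
E\bigl[(Z_n-Z_{n-1})^2\bigm|\mathcal{H}_{n-1}\bigr]=\lambda_n\,Z_{n-1}(1-Z_{n-1}),
\qquad\lambda_n:=\frac{R_n^2N_n(S_{n-1}-N_n)}{S_n^2(S_{n-1}-1)},
\]
so the bounded process $W_n:=Z_n(1-Z_n)$ satisfies $E[W_n\mid\mathcal{H}_{n-1}]=W_{n-1}(1-\lambda_n)$. Using $R_n,N_n\le k$ and the a priori bound $S_{n-1}\ge a+b+n-1$ one obtains the deterministic estimate $\lambda_n\le\rho_n:=k^3/(a+b+n-1)^2$. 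Fix $n_0$ large enough that $\rho_n<1$ for all $n\ge n_0$; iterating the resulting inequality yields
\[
E[W_n]\ \ge\ E[W_{n_0-1}]\prod_{j=n_0}^{n}(1-\rho_j),
\]
and the infinite product is strictly positive because $\sum\rho_j<\infty$. Since $H_{n_0-1}\ge a$, $S_{n_0-1}-H_{n_0-1}\ge b$ and $S_{n_0-1}\le a+b+(n_0-1)k^2$, the deterministic bound $W_{n_0-1}\ge ab/(a+b+(n_0-1)k^2)^2>0$ holds, so $E[W_{n_0-1}]>0$. Dominated convergence then gives $E[Z(1-Z)]=\lim_n E[W_n]>0$, forcing $P(Z\in(0,1))>0$, which is equivalent to $P(Z=0)+P(Z=1)<1$.

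For part (b), suppose for contradiction that $P(Z=z_0)>0$ for some $z_0\in(0,1)$. Let $\mu_n(\omega,\cdot)$ be a regular version of the conditional distribution of $Z$ given $\mathcal{F}_n$ and let $K_n(\omega,\cdot)$ denote the conditional distribution of $\sqrt{n}(Z_n-Z)$ given $\mathcal{F}_n$. Since $Z_n$ is $\mathcal{F}_n$-measurable, $K_n(\omega,\cdot)$ is the push-forward of $\mu_n(\omega,\cdot)$ under $z\mapsto\sqrt{n}(Z_n(\omega)-z)$; in particular it carries an atom of mass $p_n(\omega):=P(Z=z_0\mid\mathcal{F}_n)(\omega)$ at the point $\sqrt{n}(Z_n(\omega)-z_0)$. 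L\'evy's upward theorem gives $p_n\to\mathbf{1}_{\{Z=z_0\}}$ a.s., so $p_n\to 1$ a.s.\ on $\{Z=z_0\}$. By Theorem~\ref{main-1} the measures $K_n(\omega,\cdot)$ converge weakly to $\mathcal{N}(0,V(\omega))$ for a.e.\ $\omega$, and on $\{Z=z_0\}$ the variance $V=qm^{-2}N^{-1}z_0(1-z_0)$ is strictly positive a.s., so the limit is atomless. A subsequence argument via Portmanteau (if the atom location $\sqrt{n}(Z_n(\omega)-z_0)$ stayed bounded along a subsequence, it would cluster at some $c\in\mathbb{R}$, and $p_n\to 1$ would force the weak limit to carry a unit point mass at $c$, contradicting atomlessness) forces $|\sqrt{n}(Z_n-z_0)|\to\infty$ a.s.\ on $\{Z=z_0\}$. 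On the other hand, Theorem~\ref{main-1} also yields stable convergence, so conditioning on the positive-probability event $H:=\{Z=z_0\}$ the sequence $\sqrt{n}(Z_n-z_0)$ converges in distribution under $P(\cdot\mid H)$ to the tight mixture $P_H\mathcal{N}(0,V)$ and in particular remains bounded in probability; this contradicts the a.s.\ divergence on $H$, so $P(Z=z_0)=0$.

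The main obstacle is the atom-escape step in part (b): one has to transfer the almost sure weak convergence of the deterministic measures $K_n(\omega,\cdot)$ to an atomless limit into divergence of the atom location $\sqrt{n}(Z_n(\omega)-z_0)$ for a.e.\ $\omega\in\{Z=z_0\}$, and then reconcile this divergence with the tightness that stable convergence produces on the conditional event. Part (a) is mechanically more straightforward but requires separating the ``early'' indices $n<n_0$ where the crude bound $\rho_n$ may exceed $1$; the a priori positive lower bound on $W_{n_0-1}$ and the convergence of $\sum\rho_j$ handle the iteration from $n_0$ onward.
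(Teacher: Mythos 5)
Your proof is correct, but it proceeds quite differently from the paper's. For part a) the paper simply cites Corollary 3.4 of \cite{perron}, whereas you give a complete self-contained argument via the exact recursion $E[Z_n(1-Z_n)\,|\,\mathcal{H}_{n-1}]=(1-\lambda_n)Z_{n-1}(1-Z_{n-1})$ coming from the hypergeometric variance; the bound $\lambda_n\le k^3/S_{n-1}^2$ and the summability of $n^{-2}$ then keep $E[Z_n(1-Z_n)]$ bounded away from $0$. That computation is sound (the deterministic lower bound on $W_{n_0-1}$ and the restriction to $j\ge n_0$ correctly handle the early indices), and it is a genuine addition relative to the paper. For part b) the paper argues with characteristic functions: writing $I_n=E[I_A|\mathcal{F}_n]$ for $A=\{Z=z\}$, it combines Lemma \ref{lemma-app-C} with Theorem \ref{main-1} to get $I_A\exp(it\sqrt{n}(Z_n-z))\to I_A\exp(-t^2V/2)$ a.s., and the contradiction is simply that the left side has modulus $I_A$ while the right side is strictly smaller on $A$ for $t\ne0$. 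Your argument is the ``geometric'' counterpart: you locate an atom of mass $P(Z=z_0|\mathcal{F}_n)\to 1$ at $\sqrt{n}(Z_n-z_0)$ inside the conditional law $K_n(\omega,\cdot)$, show via Portmanteau that weak convergence to an atomless Gaussian forces the atom to escape to infinity, and then contradict tightness under $P(\cdot\,|\,\{Z=z_0\})$. Two small remarks: (i) the stable convergence of $\sqrt{n}(Z_n-Z)$ that you invoke at the end is not literally the statement of Theorem \ref{main-1} (and Theorem \ref{main-2} is unavailable under (\ref{ass-base}) alone); it follows from the standard implication ``almost sure conditional convergence $\Rightarrow$ stable convergence'' of \cite{C}, whose verification for the single event $H=\{Z=z_0\}$ is exactly the Blackwell--Dubins step the paper makes explicit, so you are implicitly using the same ingredient; (ii) the detour through stable convergence is actually unnecessary, since for a.e.\ fixed $\omega$ the weak convergence of $K_n(\omega,\cdot)$ to a probability measure already gives tightness of $\{K_n(\omega,\cdot)\}_n$, which is directly incompatible with an atom of mass tending to $1$ at locations tending to infinity. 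The paper's characteristic-function route is shorter; yours is more transparent about \emph{why} an atom of $Z$ in $(0,1)$ is impossible.
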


Note that the above result entails that the limit Gaussian kernel in
Theorem \ref{main-1} is not degenerate.

Some examples and comments follow.

\begin{ex}
If $N_n=h_n$ with $h_n\in{\mathbb N}\setminus\{0\}$ and $h_n\uparrow
h\leq a+b$, then the first condition in (\ref{ass-base}) and condition
(\ref{ass-base-bis}) are obviously satisfied with $N=h$ and
$\eta=h^{-1}$, so that we have $V=q m^{-2} h^{-1} Z(1-Z)$ and 
$U=(q m^{-2}-1)h^{-1}Z(1-Z)$.
\end{ex}

\begin{rem}\label{remark-example}
\rm If $(N_n)$ is a sequence of integer-valued random variables with
$1\leq N_n\leq k$ and converging almost surely to a random variable
$N$, then (by Lemma \ref{lemma-app-C}) the first condition in
(\ref{ass-base}) holds true. Moreover, condition (\ref{ass-base-bis})
is satisfied with $\eta=N^{-1}$ and so we have
$U=(qm^{-2}-1)N^{-1}Z(1-Z)$.
\end{rem}

The next example concerns the above remark.

\begin{ex}
Suppose that $(N_n)$ is given by a symmetric random walk with two
absorbing barriers. More precisely, given $h\in{\mathbb N}$, with
$2\leq h \leq a+b$, set
$$
\widetilde{N}_1=i \in\{2,\dots, h-1\}, \qquad
\widetilde{N}_n=i+\sum_{j=1}^{n-1} Y_j
$$ 
\noindent where each $Y_j$ is independent of $[X_1,R_1,Y_1, X_2,
  R_2,\dots, Y_{j-1}, X_j, R_j]$ and such that
$P(Y_j=-1)=P(Y_j=1)=1/2$.  Set $\Gamma_1=0$ and
$\Gamma_n=\sum_{j=1}^{n-1} Y_j$ for $n\geq 2$, and define
\begin{equation*}
\begin{split}
T_1&=\inf\{n:\widetilde{N}_n=1\}=\inf\{n: \Gamma_n=1-i\}\\
T_h&=\inf\{n:\widetilde{N}_n=h\}=\inf\{n: \Gamma_n=h-i\}.
\end{split}
\end{equation*}
Finally, for each $n\geq 1$, set $N_n=\widetilde{N}_{T\wedge n}$
where $T=T_1\wedge T_h$. Then $N_n\stackrel{a.s.}\longrightarrow
N=\widetilde{N}_T$ where $N=I_{\{T=T_1\}}+hI_{\{T=T_h\}}$. In order to
find the probabilities $P(T=T_1)=p$ and $P(T=T_h)=1-p$, it is enough
to observe that, since $(\Gamma_n)$ is a martingale, we have
$$
E[\Gamma_T]=(1-i)p+(h-i)(1-p)=0
$$
and so $p=(h-i)/(h-1)$. According to Remark \ref{remark-example},
$\eta=N^{-1}=I_{\{T=T_1\}}+h^{-1}I_{\{T=T_h\}}$. 
\end{ex}

The last example regards the case when the random variables $N_n$ are
independent and identically distributed.

\begin{ex}
Suppose that $(N_n)$ are a sequence of random variables such that each
$N_n$ is independent of ${\mathcal F}_{n-1}$ and uniformly distributed
on the set $\{1,\dots, h\}$, with $2\leq h\leq a+b$. Then
$N=E[N_n]=(h+1)/2$ and $\eta=E[N_n^{-1}]=h^{-1}\sum_{j=1}^h j^{-1}$.
\end{ex}

\section{Proofs}
\label{proofs}

We begin with a preliminary result.

\begin{prop}
Assume there exists a constant $k\in{\mathbb N}\setminus\{0\}$ such
that $N_n\vee R_n\leq k$ for each $n$ and
\begin{equation}\label{ass-base-2}
E[N_n|{\mathcal F}_{n-1}]\stackrel{a.s.}\longrightarrow N,
\quad
E[R_n]\longrightarrow m,
\end{equation}
where $N$ is a strictly positive bounded random variable and $m$ is a
finite and strictly positive number.\\ \indent Then
$$
\frac{S_n}{n}\stackrel{a.s.}\longrightarrow N m.
$$
\end{prop}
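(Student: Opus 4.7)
The plan is to show that $(1/n)\sum_{j=1}^{n} N_j R_j \to Nm$ almost surely, since $S_n/n = (a+b)/n + (1/n)\sum_{j=1}^{n} N_j R_j$ and the first term is negligible. The natural decomposition is
\[
\frac{1}{n}\sum_{j=1}^{n} N_j R_j
=\frac{1}{n}\sum_{j=1}^{n}\bigl(N_j R_j - E[N_j R_j\,|\,\mathcal{F}_{j-1}]\bigr)
+\frac{1}{n}\sum_{j=1}^{n} E[N_j R_j\,|\,\mathcal{F}_{j-1}],
\]
where the first sum is a Cesàro average of bounded martingale differences and the second sum is a Cesàro average of almost surely convergent terms.

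For the second sum, I would compute the conditional expectation by double conditioning on $\mathcal{G}_{j-1}=\mathcal{F}_{j-1}\vee\sigma(N_j)$. Since $N_j$ is $\mathcal{G}_{j-1}$-measurable and, by condition iii), $R_j$ is independent of $\mathcal{G}_{j-1}$, we get $E[N_j R_j\,|\,\mathcal{G}_{j-1}]=N_j\,E[R_j]$ and hence $E[N_j R_j\,|\,\mathcal{F}_{j-1}]=E[N_j\,|\,\mathcal{F}_{j-1}]\,E[R_j]$. By the two assumptions in \eqref{ass-base-2}, this product converges almost surely to $N m$, and because the terms are uniformly bounded by $k^2$, the Cesàro averages converge almost surely to $N m$ as well.

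For the first sum, set $A_j=N_j R_j - E[N_j R_j\,|\,\mathcal{F}_{j-1}]$. Then $(A_j)$ is a martingale difference sequence with $|A_j|\leq 2k^2$, so $\sum_{j\geq 1} E[A_j^2]/j^2<\infty$. The martingale $\sum_{j=1}^{n} A_j/j$ is therefore $L^2$-bounded and converges almost surely, and Kronecker's lemma yields $(1/n)\sum_{j=1}^{n} A_j\to 0$ a.s. Combining the two pieces gives $S_n/n\to Nm$ almost surely.

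There is no real obstacle here: the only point requiring care is the factorisation of the conditional expectation, where one must use the intermediate $\sigma$-field $\mathcal{G}_{j-1}$ in order to exploit both the $\mathcal{F}_{j-1}$-conditioning assumption on $N_j$ and the full past-independence of $R_j$ simultaneously. Everything else is a direct application of the bounded martingale strong law together with Cesàro's lemma.
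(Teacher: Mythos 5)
Your proof is correct and matches the paper's argument in its essential step: the factorization $E[N_jR_j\,|\,{\mathcal F}_{j-1}]=E[N_j\,|\,{\mathcal F}_{j-1}]\,E[R_j]$ obtained from condition iii), after which one concludes that the Ces\`aro averages of $N_jR_j$ converge almost surely to $Nm$. The only difference is that the paper invokes Lemma A.2 (Lemma 2 of \cite{BCPR-urne1}) with $Y_j=N_jR_j$ as a black box, whereas you re-derive its conclusion inline via the martingale-difference decomposition, the $L^2$-boundedness of $\sum_j A_j/j$, and Kronecker's lemma --- which is precisely the standard proof of that lemma, so the two arguments coincide in substance.
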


\begin{proof} It follows from  Lemma \ref{lemma-app} with $Y_j=N_jR_j$. Indeed,
we have $Y_j^2\leq k^4$ for each $j$ and (by iii))
$$
E[N_j R_j |{\mathcal F}_{j-1}]=E[N_j|{\mathcal F}_{j-1}] E[R_j]
\stackrel{a.s.}\longrightarrow N m\,.
$$
\end{proof}

\noindent{\it Proof of Theorem \ref{main-1}.} Setting $X'_n=X_n/N_n$
for each $n$, the sequence $(X'_n)$ is $\mathcal G$-adapted and
bounded. Moreover, we have
\begin{equation}\label{eq-p0}
\begin{split}
E[X_{n+1}'|{\mathcal G}_n]=
E[ N_{n+1}^{-1} X_{n+1}|{\mathcal G}_n]=  N_{n+1}^{-1} E[X_{n+1}|{\mathcal G}_n]=
N_{n+1}^{-1} N_{n+1} Z_n=Z_n 
\end{split}
\end{equation}
and, as we have already said, the sequence $(Z_n)$ is a bounded
$\mathcal G$-martingale. Therefore, in order to prove Theorem
\ref{main-1}, it suffices to prove that the following conditions are
satisfied (see Theorem \ref{th-main-app} applied to $Y_n=X_n'$):
\begin{itemize}
\item[c1)] $E[\sup_{j\geq 1} \sqrt{j} |Z_{j-1}-Z_j|\,]<+\infty$;
\item[c2)] $n\sum_{j\geq n}(Z_{j-1}-Z_j)^2\stackrel{a.s.}\longrightarrow V$ 
for some random variable $V$.
\end{itemize}
In the following we verify the above conditions.\\

\noindent{\em Condition c1).} We observe that equality (\ref{eq-base})
can be rewritten as
\begin{equation}\label{eq-p1} 
Z_{j-1}-Z_j =
\frac{R_j N_j (Z_{j-1}-X'_j)}{S_j}\,,
\end{equation}
so that we find 
\begin{equation}\label{eq-p2}
|Z_{j-1}-Z_j|\leq \frac{k^2}{j}\,.
\end{equation}
Therefore condition c1) is obviously verified.
\\

\noindent{\em Condition c2).} We want to apply Lemma \ref{lemma-app}
with $Y_j=j^2(Z_{j-1}-Z_j)^2$.  By the assumptions and inequality
(\ref{eq-p2}), we have $\sum_j 
j^{-2}E[Y_j^2]<+\infty$. Moreover, by equality (\ref{eq-p1}), we have
\begin{equation*}
E[Y_{j}|{\mathcal F}_{j-1}] = j^2 E[(Z_{j-1}-Z_j)^2|{\mathcal F}_{j-1}]
=j^2 E[S_j^{-2} R_j^2 N_j^2 (Z_{j-1}-X'_j)^2|{\mathcal F}_{j-1}],
\end{equation*}
and so (by iii)) we get the two inequalities 
\begin{equation*}
\begin{split}
& E[Y_{j}|{\mathcal F}_{j-1}] \geq 
\frac{j^2}{(S_{j-1}+ k^2)^2} 
E[R_j^2] E[N_j^2 (Z_{j-1}-X'_j)^2|{\mathcal F}_{j-1}]\\
& 
E[Y_{j}|{\mathcal F}_{j-1}] \leq 
\frac{j^2}{S_{j-1}^2} E[R_j^2] E[N_j^2 (Z_{j-1}-X'_j)^2|{\mathcal F}_{j-1}].
\end{split}
\end{equation*}
Since $S_n/n\stackrel{a.s.}\longrightarrow N m$ and $E[R_j^2]$ converges to
$q$, it is enough to prove the almost sure convergence of $E[N_j^2
  (Z_{j-1}-X'_j)^2|{\mathcal F}_{j-1}]$ to $N Z(1-Z)$. To this purpose,
we observe that we can write 
\begin{equation*}
E[N_j^2 (Z_{j-1}-X'_j)^2|{\mathcal F}_{j-1}]=
E\left[ N_j^2 
E[(Z_{j-1}-X'_j)^2 |{\mathcal G}_{j-1}]
\,|\,{\mathcal F}_{j-1}\right]
\end{equation*}
and, by ii) and relation (\ref{eq-p0}), the conditional expectation
$E[(Z_{j-1}-X'_j)^2 |{\mathcal G}_{j-1}]$ coin\-ci\-des with
\begin{equation*}
\begin{split}
&Z_{j-1}^2+N_j^{-2} E[X_j^2|{\mathcal G}_{j-1}]
-2 Z_{j-1} E[X'_j|{\mathcal G}_{j-1}]
=\\
&Z_{j-1}^2+ N_j^{-2}
\left[
Z_{j-1}(1-Z_{j-1})(S_{j-1}-1)^{-1} N_{j}(S_{j-1}-N_j)
+Z_{j-1}^2 N_j^2\right]
-2 Z_{j-1}^2
=\\
&
Z_{j-1}(1-Z_{j-1})(S_{j-1}-1)^{-1}N_j^{-1}\left(S_{j-1}-N_j\right).
\end{split}
\end{equation*}
Therefore we obtain 
\begin{equation*}
E[N_j^2 (Z_{j-1}-X'_j)^2|{\mathcal F}_{j-1}]
=Z_{j-1}(1-Z_{j-1})(S_{j-1}-1)^{-1}
\left(S_{j-1} E[N_j|{\mathcal F}_{j-1}]-E[N_j^2|{\mathcal F}_{j-1}]\right),
\end{equation*}
which converges to $N Z(1-Z)$ (since $E[N_j^2|{\mathcal F}_{j-1}]$ is
bounded by $k^2$ and $S_{j-1}\stackrel{a.s.}\longrightarrow +\infty$).
Hence $E[Y_j|{\mathcal F}_{j-1}]$ converges almost surely to $V$ and,
by Lemma \ref{lemma-app}, condition c2) is satisfied.\\
\indent The proof is so concluded.

{\it Proof of Theorem \ref{main-2}.} Thanks to what we have already
proven in the previous proof, it suffices to verify that the following
condition is satisfied (see Theorem \ref{th-main-app}
applied to $Y_n=X_n'$):
\begin{itemize}
\item[c3)] $n^{-1} \sum_{j=1}^n
  \big[X'_j-Z_{j-1}+j(Z_{j-1}-Z_j)\big]^2
  \stackrel{P}\longrightarrow U$ for some random variable $U$.
\end{itemize}

To this purpose, we apply Lemma \ref{lemma-app} with
$$Y_j=\big[ X'_j - Z_{j-1}+ j(Z_{j-1}-Z_j) \big]^2.$$ Indeed, by the assumptions
and inequality (\ref{eq-p2}), we have $\sum_j
j^{-2}E[Y_j^2]<+\infty$. Moreover, from what we have already seen in
the previous proof, we can get
\begin{equation*}
j^2E[(Z_{j-1} - Z_{j})^2|{\mathcal F}_{j-1}]
\stackrel{a.s.}\longrightarrow V\,,
\end{equation*}
\begin{equation*}
E[(X'_j-Z_{j-1})^2|{\mathcal F}_{j-1}]
\stackrel{a.s.}\longrightarrow
\eta Z(1-Z)
\end{equation*}
and, with a similar arguments,  
\begin{equation*}
\begin{split}
2jE[(X_j'-Z_{j-1})(Z_{j-1}-Z_j)|{\mathcal F}_{j-1}]&=
-2jE\left[S_j^{-1} R_jN_j (Z_{j-1}-X_j')^2|{\mathcal F}_{j-1}\right]
\\
&\stackrel{a.s.}\longrightarrow
-2 N^{-1} Z(1-Z).
\end{split}
\end{equation*}

\noindent{\it Proof of Corollary \ref{cor-legge}.}  
Assertion a) is proven in Corollary 3.4. in \cite{perron}. Let us
prove assertion b) arguing as in \cite{z2}.\\

\indent Let $A$ be a $\bigvee_{n}{\mathcal F}_n$-measurable event and
set $I_n=E[I_A|{\mathcal F}_n]$. Then
$I_n\stackrel{a.s.}\longrightarrow I_A$. By Lemma \ref{lemma-app-C}, we find 
\begin{equation}\label{eq1-cor}
E\big[(I_A-I_n)\exp(it\sqrt{n}(Z_n-Z))|{\mathcal F}_n\big]
\stackrel{a.s.}\longrightarrow 0.
\end{equation}
On the other hand,  by Theorem \ref{main-1}, we have
\begin{equation}\label{eq2-cor}
E\big[I_n\exp(it\sqrt{n}(Z_n-Z))|{\mathcal F}_n\big]=
I_n E\big[\exp(it\sqrt{n}(Z_n-Z))|{\mathcal F}_n\big]
\stackrel{a.s.}\longrightarrow I_A \exp(-(t^2 V)/2).
\end{equation}
Hence, from (\ref{eq1-cor}) and (\ref{eq2-cor}), we get
\begin{equation}\label{eq-c1}
E\big[I_A\exp(it\sqrt{n}(Z_n-Z))|{\mathcal F}_n\big]
\stackrel{a.s.}\longrightarrow \exp(-(t^2 V)/2)I_A.
\end{equation}
In order to conclude, it is enough to fix $z\in (0,1)$, take
$A=\{Z=z\}$ and observe that (\ref{eq-c1}) implies almost surely  
\begin{equation*}
\begin{split}
I_A\exp(-(t^2 V)/2)&=
\lim_n E\big[I_A\exp(it\sqrt{n}(Z_n-Z))|{\mathcal F}_n\big]\\
&=
\lim_nE\big[I_A\exp(it\sqrt{n}(Z_n-z))|{\mathcal F}_n\big]\\
&=
\lim_n I_n\exp(it\sqrt{n}(Z_n-z))=
\lim_n I_A\exp(it\sqrt{n}(Z_n-z))
\end{split}
\end{equation*}
and so almost surely 
$$ 
I_A=|\lim_n I_A\exp(it\sqrt{n}(Z_n-z))|=I_A\exp(-(t^2 V)/2).
$$ Since we have $V>0$ on $A$, it results $\exp(-(t^2 V)/2)<1$ on $A$
for $t\neq 0$ and so we necessarily conclude that $P(A)$ is zero.

\section{Statistical tools}
\label{stat}

\subsection{Asymptotic confidence intervals for the limit proportion} 
\label{conf-int}

By means of Theorem \ref{main-1} and Theorem \ref{main-2}, we can
construct asymptotic confidence intervals for the limit proportion
$Z$. More precisely, under the assumptions of Theorem \ref{main-1},
also assume $k\leq a+b$ (so that $N_{n}\leq S_{n-1}$ for each $n$). If
we are in the particular case when:
\begin{itemize}
\item for each $n$, the random variable $N_n$ is independent of
  ${\mathcal F}_{n-1}$ and all the random variables $N_n$ are
  identically di\-stri\-bu\-ted with mean value $\mu$ (so that
  $N=E[N_n]=\mu$ and $\eta=E[N_n^{-1}]$) and
\item all the random variables $R_n$ (that are independent by assumption
  iii)) are also identically distributed (so that $m=E[R_n]$ and
  $q=E[R_n^2]$),
\end{itemize}
then two asymptotic confidence intervals for $Z$ are
\begin{equation}
Z_n\pm q_{1-\frac{\alpha}{2}}\sqrt{\frac{V_n}{n}}
\qquad
M_n\pm q_{1-\frac{\alpha}{2}}\sqrt{\frac{W_n}{n}}
\end{equation}
where $q_{1-\frac{\alpha}{2}}$ is the quantile of order
$1-\frac{\alpha}{2}$ of the standard normal distribution and
\begin{equation}
V_n=\frac{q_n}{m_n^2\mu_n}Z_n(1-Z_n),\qquad
W_n=
\left(
\frac{2q_n}{m_n^2\mu_n}+\eta_n-\frac{2}{\mu_n}
\right)
M_n(1-M_n)
\end{equation}
with
\begin{equation}
\begin{split}
m_n&=\frac{\sum_{j=1}^n R_j}{n},\qquad
q_n=\frac{\sum_{j=1}^n R_j^2}{n}
\\
\mu_n&=\frac{\sum_{j=1}^n N_j}{n},\qquad
\eta_n=\frac{\sum_{j=1}^n N_j^{-1}}{n}.
\end{split}
\end{equation}

Note that the second interval does not depend on the initial
composition of the urn, which could be unknown.

\subsection{The case of more urns}
\label{two}

Let ${\mathcal U}$ be a finite set. Every index $u\in{\mathcal U}$
labels an urn initially containing $a(u)$ balls of color $A$ and
$b(u)$ balls of color $B$.  Each of the urn follows the dynamics
described in the Section \ref{model}.  For instance, according to the
interpretations given in Section \ref{intro}, we can see $\mathcal U$
as a set of different markets or different populations.\\

\indent More precisely, we take a probability space
$(\Omega,\mathcal{A},P)$ and, on it, some random vectors
$X_n=[X_n(u)]_{u\in \mathcal U}$, $N_n=[N_n(u)]_{u\in\mathcal U}$,
$R_n=[R_n(u)]_{u\in\mathcal U}$ such that, for each $n\geq 1$, we
have:

\begin{itemize}
\item[i)] The conditional distribution of the random vector $N_n$ given 
$$[N_1,X_1, R_1,\dots,N_{n-1}, X_{n-1}, R_{n-1}]$$ is concentrated on
  $\prod_{u\in {\mathcal U}}\{1,\dots,S_{n-1}(u)\}$ where
\begin{equation}
S_{n-1}(u)=a(u)+b(u)+\sum_{j=1}^{n-1} N_j(u) R_j(u).
\end{equation}

\item[ii)] The conditional distribution of the random vector $X_n$ given 
$$[N_1,X_1, R_1,\dots,N_{n-1}, X_{n-1}, R_{n-1}, N_n]$$ is 
the product 
$$\bigotimes_{u\in{\mathcal U}}
\hbox{Hypergeom}(N_n(u),S_{n-1}(u),H_{n-1}(u)),$$ where
$\hbox{Hypergeom}(N_n(u),S_{n-1}(u),H_{n-1}(u))$ denotes the hypergeometric
distribution with parameters $N_n(u),\, S_{n-1}(u)$ and
$H_{n-1}(u)$ with 
\begin{equation}
H_{n-1}(u)=a(u)+\sum_{j=1}^{n-1} X_j(u) R_j(u). 
\end{equation}

\item[iii)] The random vector $R_{n}$ takes values in 
$({\mathbb
  N}\setminus\{0\})^{card({\mathcal U})}$ and it is independent of
 $$[N_1,X_1,R_1,\ldots,N_{n-1}, X_{n-1},R_{n-1}, N_n, X_n].$$
\end{itemize}

\noindent We set $Z_n(u)$ equal to the proportion of balls of color A in
the urn $u$ (immediately after the updating of the urn at time $n$ and
immediately before the $(n+1)$-th extraction), that is
$Z_0(u)=a(u)/(a(u)+b(u))$ and
\begin{gather*}
  Z_n(u)=\frac{H_n(u)}{S_n(u)}\quad\hbox{for } n\geq 1.
\end{gather*}
Moreover we set 
\begin{equation*}
\mathcal{F}_0=\{\emptyset, \Omega \},\quad\mathcal{F}_n=
\sigma\bigl(N_1,X_1,R_1,\ldots,N_n, X_n,R_n\bigr)\quad\hbox{for } n\geq 1\,,
\end{equation*}
and 
\begin{equation*}
\mathcal{G}_n=\mathcal{F}_n\vee\sigma(N_{n+1})
\quad\hbox{for } n\geq 0.
\end{equation*}

From condition ii) follows that $X_n(u)$ and $X_n(v)$ are ${\mathcal
  G}_{n-1}$-conditionally independent for $u\neq v$ and so, setting
$X'_n(u)=X_n(u)/N_n(u)$ for each $n$ and $u$, we have 
\begin{equation}\label{cond-ind}
\begin{split}
&
E\left[\left(Z_{n-1}(u)-X'_n(u)\right)
\left(Z_{n-1}(v)-X'_n(v)\right)|{\mathcal G}_{n-1}\right]
=
\\
&
E[Z_{n-1}(u)-X'_n(u)|{\mathcal G}_{n-1}]
E[Z_{n-1}(v)-X'_n(v)|{\mathcal G}_{n-1}]
\\
&
=0.
\end{split}
\end{equation}

It is worthwhile to note that, for a given $n$, we are not assuming
the random variables $N_n(u)$ (resp. $R_n(u)$), with $u\in{\mathcal
  U}$, to be independent. For example, we can assume
$$
N_n(u)=h(u)+F'_n\qquad R_n(u)=r(u)+F''_n
$$ where $h(u),\, r(u)$ are specific constants for each urn $u$ and
$F'_n,\,F''_n$ are random factors that are common to all the urns.
\\

Suppose now that there exists $k\in{\mathbb N}\setminus\{0\}$ with
$N_n(u)\vee R_n(u)\leq k\leq a(u)+b(u)$ for each $n$ and $u$ and that
the additional assumptions stated in Section \ref{conf-int} are
satisfied for each $u$. Set $m(u)=E[R_n(u)]$, $q(u)=E[R_n(u)^2]$,
$\mu(u)=E[N_n(u)]$ and $\eta(u)=E[N_n(u)^{-1}]$.  Denoting by
$M_n=[M_n(u)]_{u\in{\mathcal U}}$ the vector containing the empirical
mean of $X'_j(u)$ up to time $n$ for each urn $u$ and by
$Z=[Z(u)]_{u\in{\mathcal U}}$ the vector containing the almost sure
limit of $Z_n(u)$ (and $M_n(u)$) for each $u$, we have as a
consequence of (\ref{cond-ind}) that, for any vector
$\alpha=[\alpha(u)]_{u\in{\mathcal U}}$ of real numbers, the sequence
$\sqrt{n}\langle \alpha, (Z_n-Z)\rangle$\footnote{ The symbol
  $\langle\cdot,\cdot\rangle$ denotes the scalar product between two
  vectors.} converges in the sense of the almost sure conditional
convergence with respect to $\mathcal F$ to ${\mathcal
  N}(0,\sum_{u\in{\mathcal U}} \alpha(u)^2 V(u))$, where
$V(u)=\frac{q(u)}{m(u)^{2}\mu(u)}Z(u)(1-Z(u))$ and $\sqrt{n}\langle
\alpha, (M_n-Z)\rangle$ converges stably to ${\mathcal N}(0,
\sum_{u\in{\mathcal U}}\alpha(u)^2(U(u)+V(u)))$, where
$U(u)=\left(\frac{q(u)}{m(u)^{2}\mu(u)}+\eta(u)-
\frac{2}{\mu(u)}\right)Z(u)(1-Z(u))$.  Similarly as done in the
previous section, these convergence results can be useful in order to
get asymptotic confidence intervals for the linear combination
$\langle \alpha,Z\rangle$ of the limit proportions $Z(u)$.  \\

\indent Finally, the above results can be employed in order to obtain
asymptotic critical regions for tests. For instance, in order to
perform a statistical test with
\begin{equation*}
H_0:\; m(u)\geq \hbox{card}({\mathcal U}')^{-1}
\sum_{v\in{\mathcal U}'}m(v)\quad\hbox{against}\quad
H_1:\; m(u)<\hbox{card}({\mathcal U}')^{-1}\sum_{v\in{\mathcal U}'}m(v) 
\end{equation*}
where ${\mathcal U}'\subset{\mathcal U}$ and $u\notin{\mathcal U}'$,
we can use the asymptotic critical region
$$
\left\{ 
\frac{ 
\sqrt{
\hbox{card}({\mathcal U}')^{-1}
\sum_{v\in{\mathcal U}'}m_n(v)
}
}
{ \sqrt{m_n(u)} }
\frac{ \sqrt{n} \, |M_{n}(u)-Z_{n}(u)|}{ \sqrt{U_{n}(u)} }
>q_{ 1- \frac{\alpha}{2} }
\right\}
$$
where
\begin{equation}
U_{n}(u)=
\left(
\frac{q_n(u)}{m_n(u)^2\mu_n(u)}+\eta_n(u)-\frac{2}{\mu_n(u)}
\right)
Z_n(u)(1-Z_n(u))
\end{equation}
with 
\begin{equation}
\begin{split}
m_n(u)&=\frac{\sum_{j=1}^n R_j(u)}{n},\qquad
q_n(u)=\frac{\sum_{j=1}^n R_j(u)^2}{n}
\\
\mu_n(u)&=\frac{\sum_{j=1}^n N_j(u)}{n},\qquad
\eta_n(u)=\frac{\sum_{j=1}^n N_j(u)^{-1}}{n}.
\end{split}
\end{equation}

\noindent {\bf Acknowledgments}
\\
\noindent Irene Crimaldi acknowledges support from CNR PNR Project
``CRISIS Lab''.  Moreover, she is a member of the Italian group ``Gruppo
Nazionale per l'Analisi Matematica, la Probabilit\`a e le loro
Applicazioni (GNAMPA)'' of the Italian In\-sti\-tu\-te ``Istituto Nazionale
di Alta Matematica (INdAM)''.

\appendix

\section{Some auxiliary results}

For reader's convenience, we state here some results used in the
proofs.

\begin{lem-app}\label{lemma-app-C} (Th. 2 in \cite{Bl-Du} or  
a special case of Lemma A.2 in \cite{C})\\
Let $\mathcal F$ be a filtration and set ${\mathcal
  F}_{\infty}=\bigvee_n {\mathcal F}_n$.  Then, for each sequence
$(Y_n)$ of integrable complex random variables, which is dominated in
$L^1$ and which converges almost surely to a complex random variable
$Y$, the conditional expectation $E[Y_n|{\mathcal F}_n]$ converges
almost surely to the conditional expectation $E[Y| {\mathcal
    F}_{\infty} ]$.
\end{lem-app}

\begin{lem-app}\label{lemma-app} (Lemma 2 in \cite{BCPR-urne1})\\
Let $(Y_n)$ be a sequence of real random variables, adapted to a
filtration $\mathcal F$. If $\sum_{j\geq 1} j^{-2} E[Y_j^2]<+\infty$
and $E[Y_{j}| {\mathcal F}_{j-1}]\stackrel{a.s.}\longrightarrow Y$ for
some random variable $Y$, then
$$
n\sum_{j\geq n}\frac{Y_j}{j^2}\stackrel{a.s.}\longrightarrow Y,
\qquad
\frac{1}{n}\sum_{j=1}^n Y_j\stackrel{a.s.}\longrightarrow Y.
$$
\end{lem-app}

\begin{thm-app}\label{th-main-app} 
(Special case of Th.~1 together with Prop.~1 in \cite{BCPR-urne1}
  and Th.~10 in \cite{BCPR-indian})\\ Let $(Y_n)$ be a bounded sequence
  of real random variables, adapted to a filtration ${\mathcal
    G}=(\mathcal{G}_n)$. Set
\begin{equation*}
M_n=\frac{1}{n}\sum_{j=1}^n Y_j\quad\text{and}\quad 
Z_n=E[Y_{n+1}|\mathcal{G}_n].
\end{equation*}
Suppose that $(Z_n)$ is a $\mathcal G$-martingale.\\

\indent Then, $Z_n\overset{a.s./L^1}\longrightarrow Z$ and
$M_n\overset{a.s./L^1}\longrightarrow Z$ for some real random variable
$Z$. Moreover, $\sqrt{n}(Z_n-Z)$ converges in the sense of the almost
sure conditional convergence with respect to $\mathcal G$ toward the
Gaussian kernel~${\mathcal N}(0,V)$ for some random variable $V$,
provided
\begin{itemize}
\item[c1)] $E\left[\sup_{j\geq
    1}\sqrt{j}\,|Z_{j-1}-Z_j|\,\right]<+\infty$, 
\item[c2)] $n\sum_{j\geq n}(Z_{j-1}-Z_j)^2\stackrel{a.s.}\longrightarrow
V$.
\end{itemize}
If condition
\begin{itemize}
\item[c3)] $n^{-1}\sum_{j=1}^n\bigl[Y_j-Z_{j-1}+j(Z_{j-1}-Z_j)\bigr]^2
  \stackrel{P}\longrightarrow U$
\end{itemize}
is also satisfied for some random variable $U$, then
\begin{equation*}
\left[\sqrt{n}\,\bigl(M_n-Z_n\bigr),\sqrt{n}(Z_n-Z)\right]
\stackrel{stably}\longrightarrow
\mathcal{N}\bigl(0,\,U\bigr)\otimes{\mathcal N}(0,V).
\end{equation*}
\end{thm-app}


\begin{thebibliography}{99}
\footnotesize

\bibitem{AE} {\sc Aldous, D.~J. and Eagleson, G.~K.} (1978). 
On mixing and stability of limit theorems.  
{\em Ann. Probab.} {\bf 6}, 325--331.

\bibitem{AMS} {\sc Aletti, G., May, C. and Secchi, P.} (2009).  
A central limit theorem, and related results, 
for a two-color randomly reinforced urn.  
{\em Adv. Appl. Probab.} {\bf 41}, 829--844.

\bibitem{perron} {\sc Aoudia, D.~A. and Perron, F.} (2012). A new randomized
  P\`olya urn model. {\em Applied Mathematics} {\bf  3}, 2118--2122.

\bibitem{BH} {\sc Bai, Z.~D. and Hu, F.} (2005).  
Asymptotics in randomized urn models.  
{\em Ann. Appl. Probab.} {\bf 15}, 914--940.

\bibitem{BHR} {\sc Bai, Z.~D., Hu, F. and Rosenberger, W.~F.} (2002).  
Asymptotic properties of adaptive designs for clinical
trials with delayed response. {\em Ann. Statist.} {\bf 30}, 122--139. 

\bibitem{BCL} {\sc Bassetti, F., Crimaldi, I. and Leisen F.}  (2010). 
  Conditionally identically distributed species sampling sequences. 
  {\em Adv. Appl. Probab.} {\bf 42}, 433--459.

\bibitem{beggs} {\sc Beggs, A.~W.} (2005). On the convergence of reinforcement
  learning. {\em J. Econom. Theory} {\bf 122}(1), 1--36.

\bibitem{BCPR-indian} {\sc Berti, P., Crimaldi, I., Pratelli, L.  and Rigo,
  P.} (2015). Central limit theorems for an Indian buffet model with
  random weights. {\em Ann. Appl. Probab.} {\bf 25}(2), 523--547.

\bibitem{BCPR-urne1} {\sc Berti, P., Crimaldi, I., Pratelli, L.  and Rigo, 
  P.} (2011). A central limit theorem and its applications to
  multicolor randomly reinforced urns. {\em J. Appl. Probab.} {\bf 48}(2),
  527--546.

\bibitem{BCPR-urne-dom} {\sc Berti, P., Crimaldi, I., Pratelli, L. and
  Rigo, P.}  (2010). Central limit theorems for multicolor urns with
  dominated colors. {\em Stoch. Proc. Appl.} {\bf 120}, 1473--1491.

\bibitem{BCPR} {\sc Berti, P., Crimaldi, I., Pratelli, L.  and Rigo,
  P.} (2009).  Rate of convergence of predictive distributions for
  dependent data.  {\em Bernoulli} {\bf 15}, 1351--1367.

\bibitem{BPR} {\sc Berti, P., Pratelli, L. and Rigo, P.} (2004). 
Limit theorems for a class of identically distributed 
random variables. {\em Ann. Probab.} {\bf 32}, 2029--2052.

\bibitem{Bl-Du} {\sc Blackwell, D. and Dubins, L.} (1962). Merging of
  opinions with increasing information. {\em Ann. Math. Stat.} {\bf
    33}(3), 882--886.

\bibitem{BCM} {\sc Boldi, P., Crimaldi, I. and Monti, C.} (2014). A Network
  Model characterized by a Latent Attribute Structure with
  Competition. Available on arXiv (1407.7729, 2014), submitted.

\bibitem{Bose} {\sc Bose, A., Dasgupta, A. and Maulik, K.} (2009). Multicolor
  urn models with reducible replacement matrices. {\em Bernoulli},
  {\bf 15}(1), 279--295.

\bibitem{CCCP} {\sc Caldarelli, G., Chessa, A., Crimaldi, I.,
  Pammolli, F.} (2013). Weighted networks as randomly reinforced urn
  processes.  {\em Physical Review E} {\bf 87}(2), 020106(R).

\bibitem{CPS} {\sc Chauvin, B., Pouyanne, N. and Sahnoun, R.} (2011).  
Limit distributions for large P\'olya urns. {\em Ann. Appl.
Probab.} {\bf 21}(1), 1--32.

\bibitem{Chen} {\sc Chen, M.~R. and Kuba, M.} (2013). On generalized P\'olya
  urn models. {\em J. Appl. Probab.} {\bf 50}(4), 1169--1186.

\bibitem{Chenb} {\sc Chen, M.~R. and Wei, C.~Z.} (2005). A New Urn
  Model. {\em J. Appl. Probab.} {\bf 42}(4), 964--976.

\bibitem{colle} {\sc Collevecchio, A., Cotar, C. and LiCalzi, M.} (2013). On a
  preferential attachment and generalized P\'olya's urn model. {\em
    Ann. Appl. Probab.} {\bf 23}(3), 1219--1253.

\bibitem{C} {\sc Crimaldi, I.} (2009). An almost sure conditional convergence
  result and an application to a generalized P\'olya urn. {\em
    Internat. Math. Forum} {\bf 4}(23), 1139-–1156. 

\bibitem{CL} {\sc Crimaldi, I. and Leisen, F.} (2008). Asymptotic
  results for a generalized P\'olya urn with multi-updating and
  applications to clinical trials. {\em Communications in Statistics -
    Theory and Methods} {\bf 37}(17), 2777--2794.

\bibitem{cri-let-pra-2007} {\sc Crimaldi, I., Letta, G. and Pratelli,
  L.} (2007).  A strong form of stable convergence. {\em
  S\'eminaire de Probabilit\'es XL} (LNM 1899), Springer, 203--225.

\bibitem{Das} {\sc Dasgupta, A. and Maulik, K.} (2011). Strong laws for urn
  models with balanced replacement matrices. {\em
    Electron. J. Probab.} {\bf 16}(63), 1723--1749.

\bibitem{DFL} {\sc Durham, S.~D., Flournoy, N. and Li, W.} (1998). A
  sequential design for maximizing the probability of a favourable
  response. {\em Canad. J. Statist.} {\bf 26}, 479--495.

\bibitem{P23} {\sc Eggenberger, F. and P\'olya, G.} (1923).  
Uber die Statistik verketteter Vorg\"ange. {\em Zeitschrift Angew. Math.
Mech.} {\bf 3}, 279--289.

\bibitem{ER} {\sc Erev, I. and Roth, A.} (1998). Predicting how people play
  games: reinforcement learning in experimental games with unique,
  mixed strategy equilibria. {\em Amer. Econ. Rev.} {\bf 88}, 848--881.

\bibitem{feigin-1985} {\sc Feigin, P.~D.} (1985). Stable Convergence of 
Semimartingales. {\em Stoch. Proc. Appl.} {\bf 19}, 125--134.

\bibitem{hall-1980} {\sc Hall, P. and Heyde, C.~C.} (1980).  {\em
  Martingale Limit Theory and Its Applications}. Academic Press, New
  York.

\bibitem{HP} {\sc Hopkins, E. and Posch, M.} (2005). Attainability of
  boundary points under reinforcement learning. {\em Games
    Econom. Behavior} {\bf 53}, 110--125.

\bibitem{HR} {\sc Hu, F. and Rosenberger, W.~F.} (2006). {\em The Theory of
  Response-Adaptive Randomization in Clinical Trials}. John Wiley and
  Sons Inc., New York.

\bibitem{jacod-1981} {\sc Jacod, J. and Memin, J.} (1981). Sur un type de
  convergence interm\'ediaire entre la convergence en loi et la
  convergence en probabilit\'e.  {\em S\'eminaire de Probabilit\'es
    XV} (LNM 850), Springer, 529--546.

\bibitem{J05} {\sc Janson, S.} (2005).  
Limit theorems for triangular urn schemes.  
{\em Probab. Theo. Rel. Fields} {\bf 134}(3), 417--452.

\bibitem{LP} {\sc Laruelle, S. and Pag\`es, G.} (2013). Randomized urn models
  revisited using stochastic approximation. {\em Ann. Appl.  Probab.} 
  {\bf 23}(4), 1409--1436. 

\bibitem{mah-2008} {\sc Mahmoud, H.} (2008). {\em P\'olya Urn Models}.   
Chapman-Hall, Boca Raton, Florida.

\bibitem{MH} {\sc Martin, C.~F. and Ho, Y.~C.} (2002). Value of
  information in the P\'olya urn process. {\em Information Sciences}
  {\bf 147}, 65--90.

\bibitem{MF} {\sc May, C. and Flournoy, N.} (2009). Asymptotics in
  response-adaptive designs generated by a two-color, randomly
  reinforced urn. {\em Ann. Statist.} {\bf 37}, 1058--1078.

\bibitem{MPS} {\sc Muliere, P., Paganoni, A.~M. and Secchi, P.}
  (2006). A randomly reinforced urns. {\em J. Statisit. Plann.
  Inference} {\bf 136}(6), 1853--1874.

\bibitem{peccati-2008} {\sc Peccati, G. and Taqqu, M.~S.} (2008).  
Stable convergence of multiple Wiener-It\^o integrals. 
{\em Journal of Theoretical Probability} {\bf 21}(3), 527--570.

\bibitem{P} {\sc Pemantle, R.}  (2007).  
A survey of random processes with reinforcement.  
{\em Probab. Surveys} {\bf 4}, 1--79.

\bibitem{P31} {\sc P\'olya, G.} (1931). Sur quelques points de la
  th\'eorie des probabilit\'e. {\em Ann. Inst. Poincar\'e} {\bf 1},
  117--161.

\bibitem{renyi-1963} {\sc R\'enyi, A.} (1963). On stable sequences of events,
  {\em Sankhya A} {\bf 25}, 293--302.

\bibitem{z2} {\sc Zhang, L.X.} (2014). A Gaussian process approximation
  for two-color randomly reinforced urns. {\em Electron. J. Probab.}
  {\bf 19}(86), 1--19.

\bibitem{z1} {\sc Zhang, L.~X., Hu, F., Cheung, S.~H. and Chan, W.~S.} 
 (2014). Asymptotic properties of multicolor randomly 
reinforced P\'olya urns. {\em Adv. Appl. Prob.} {\bf 46},
 585--602.

\bibitem{z3} {\sc Zhang, L.~X., Hu, F. and Cheung, S.~H.} (2006).  Asymptotic
  theorems of sequential estimation-adjusted urn models for clinical
  trials. {\em Ann. Appl. Probab.} {\bf 16}(1), 340--369.

\end{thebibliography}
\end{document}